%%% WHO IS EDITING: No one at the moment %%%
%%% LAST EDIT: Sam at 9:09pm 8/13/16 %%%

\documentclass{article}
\usepackage{amssymb,amsmath,amsthm,graphicx}
\usepackage[usenames,dvipsnames,svgnames,table]{xcolor}
\usepackage{color}
\textheight 8.5in
\textwidth 6.5in
\oddsidemargin 0in
\topmargin 0in

 % I've always felt like a wild strawberry
\newtheorem{theorem}{Theorem}

\newtheorem{proposition}[theorem]{Proposition}

\theoremstyle{definition}
\newtheorem{example}{Example}
\newtheorem{definition}{Definition}

\date{}

\title{\Large \textbf{Biquasiles and Dual Graph Diagrams}}

\author{Deanna Needell \footnote{Email: dneedell@cmc.edu. Partially supported by the Alfred P. Sloan Foundation and NSF CAREER $\#1348721$.} \and
Sam Nelson\footnote{Email: Sam.Nelson@cmc.edu. Partially supported by Simons Foundation collaboration grant $\#316709$.}}

\begin{document}
\maketitle

\begin{abstract}
We introduce \textit{dual graph diagrams} representing oriented knots and 
links. We use these combinatorial structures to define corresponding algebraic 
structures we call \textit{biquasiles} whose axioms are motivated by dual graph 
Reidemeister moves, generalizing the Dehn presentation of the knot group 
analogously to the way quandles and biquandles generalize the Wirtinger 
presentation. We use these structures to define invariants of oriented knots 
and links and provide examples.
\end{abstract}

\parbox{5.5in} {\textsc{Keywords:} biquasiles, dual graph diagrams, 
checkerboard graphs

\smallskip

\textsc{2010 MSC:} 57M27, 57M25}

\section{Introduction}

The checkerboard colorings of a planar knot complement have long been used
in knot theory, going back to papers such as \cite{L}. From the undecorated 
checkerboard graph, one can reconstruct an unoriented alternating knot or 
link up to mirror image. In \cite{K2}, signs are added to edges, enabling 
reconstruction of not necessarily alternating unoriented knots and links. In 
this paper, we introduce \textit{dual graph diagrams} for oriented knots and 
links, a type of diagram using both of the (mutually dual) checkerboard graphs 
decorated with some edges having signs and others having directions, enabling 
recovery of arbitrary oriented knots and links. A similar graph without 
decorations was used in the study of the dimer model of the Alexander and 
twisted Alexander polynomials in \cite{CDR}.

Analogously to the construction of quandles and biquandles from a coloring
scheme for arcs and semiarcs in oriented knot and link diagrams \cite{EN}, 
we introduce
a coloring scheme for vertices in a dual graph diagram. This coloring scheme
motivates a new algebraic structure known as a \textit{biquasile} with 
axioms determined by the dual graph Reidemeister moves. More precisely, the
biquasile axioms are chosen so that biquasile colorings of dual graph diagrams
are preserved faithfully by Reidemeister moves. This enables us to define 
biquasile counting invariants of knots and links and allows the introduction 
of enhancements of these invariants.

The paper is organized as follows. In Section \ref{DGD} we introduce dual graph
diagrams and the dual graph Reidemeister moves, as well as the reconstruction
algorithm; a generic dual graph diagram presents a type of directed bivalent 
spatial graph, sometimes known as a \textit{magnetic graph} \cite{KM,M,M2}. We 
introduce a geometric-style oriented link invariant defined from the dual graph
representation, the \textit{dual graph component number}, and show that this
invariant is bounded above by the braid index.
In Section \ref{B} we introduce biquasiles, deriving the biquasile axioms 
from the dual graph diagram Reidemeister moves and introduce biquasile
counting invariants for oriented knots and links. We provide several examples
illustrating the computation of the invariant and collecting results.
In Section \ref{AB} we turn our focus to the case of Alexander 
biquasiles, a type of biquasile structure defined on modules over
the three-variable Laurent polynomial ring  
$L=\mathbb{Z}[d^{\pm 1}, s^{\pm 1}, n^{\pm 1}]$. 
%In Section \ref{T} we generalize
%biquasiles to the case of coloring algebras for singular knots and links, 
%adding a new operation to obtain \textit{triquasiles}. 
We conclude in Section \ref{Q} with some questions for future research.

\section{Acknowledgements}

The authors would like to thank the referee for helpful comments and Jieon Kim
and Philipp Korablev for catching mistakes in the first version of this paper.

\section{Dual graph diagrams}\label{DGD}

We begin with a definition.

\begin{definition}
Let $D$ be an oriented knot or link diagram. The \textit{dual graph diagram}
$G$ associated to $D$ has a vertex associated to each region of the planar knot complement and edges
joining vertices whose regions are opposite at crossings. The edges are given
directions or $+/-$ signs as pictured:
\[\includegraphics{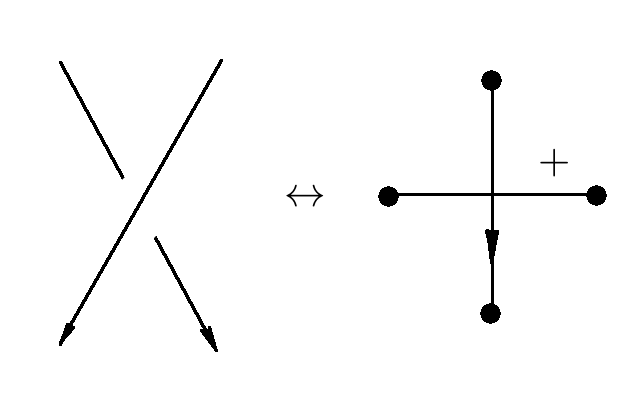}\quad \includegraphics{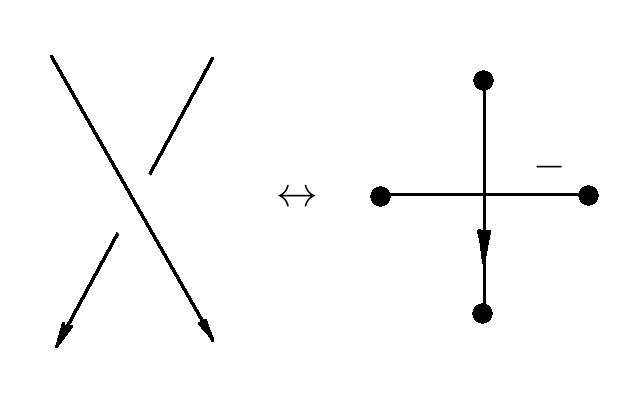}\]
Conversely, given a pair $G\cup G'$ of dual planar graphs (i.e., such that 
each region of $S^2\setminus G$ contains a unique vertex of $G'$ with adjacent
regions in $S^2\setminus G$ corresponding to adjacent vertices in $G'$), 
$G\cup G'$ becomes a dual graph diagram when we assign either a direction or 
$+$ or $-$ sign to each edge such that each pair of crossed edges has one 
signed edge and one directed edge.
\end{definition}

\begin{example}
The oriented knot diagram below has the corresponding dual graph diagram below.
\[\includegraphics{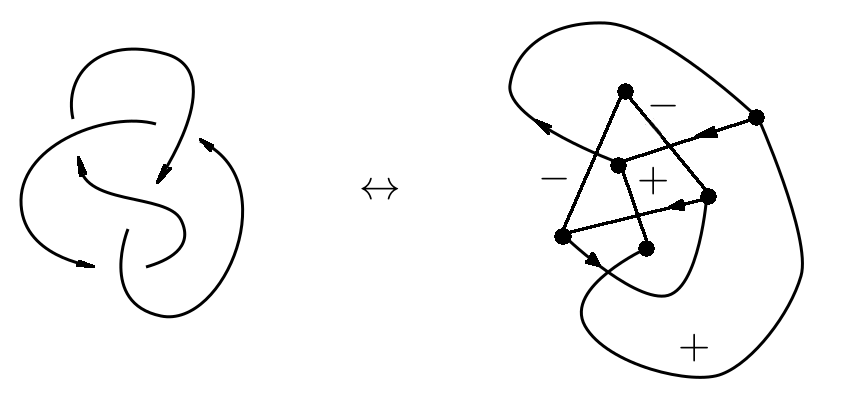}\]
\end{example}

We can understand the dual graph diagram as the result of superimposing the 
two checkerboard graphs associated to the knot or link diagram and decorating 
the edges to indicate orientation and crossing information. 
\[\includegraphics{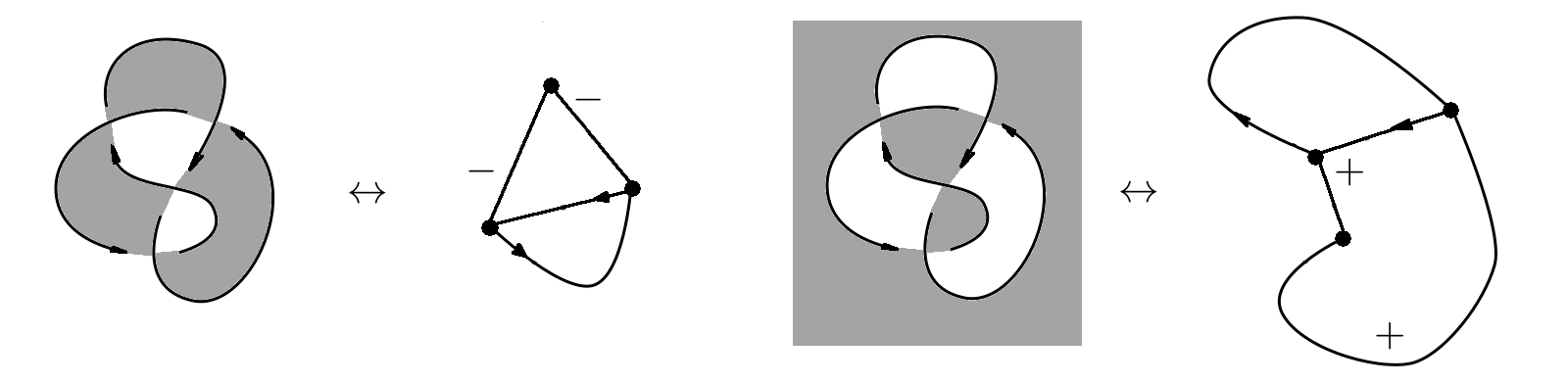}\]

A natural question is which dual graph diagrams present oriented knots
or links. Let us consider the reconstruction algorithm for obtaining the 
original oriented knot or link diagram from its dual graph diagram. First,
we note that each crossing of edges in the dual graph represents a 
crossing in the original link diagram, so we can start by putting a crossing 
at each edge-crossing with crossing information as determined by the direction
and sign decorations carried by the edges:
\[\includegraphics{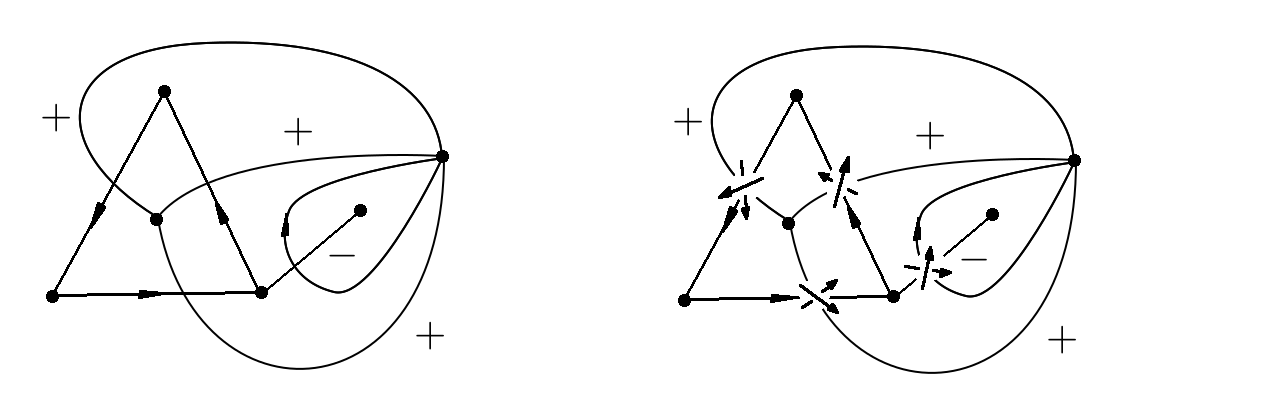}\]
We then observe that a dual graph diagram tiles the sphere $S^2$ with 
quadrilaterals with two corners given by vertices and two corners given by 
edge crossings as depicted:
\[\includegraphics{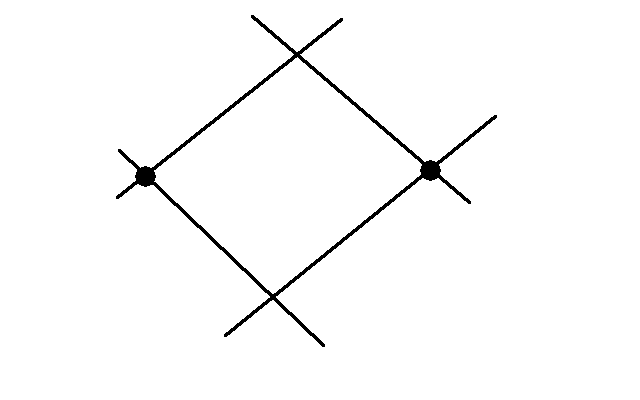}\]
Some such quadrilaterals may be degenerate, with boundary formed by a leaf and 
a loop:
\[\includegraphics{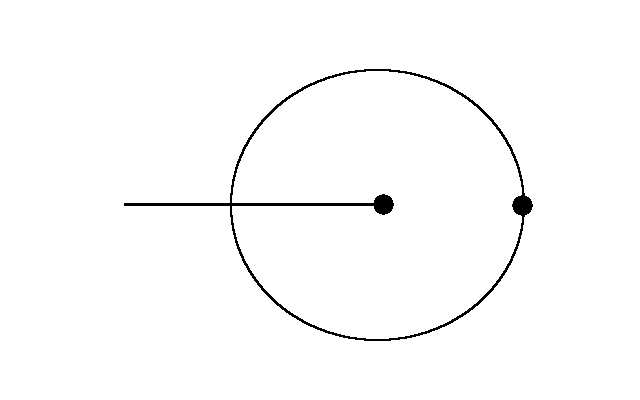}\]
Within each quadrilateral there is a unique path (up to planar isotopy)
connecting the ends of the crossings; drawing these paths completes the 
diagram:
\[\includegraphics{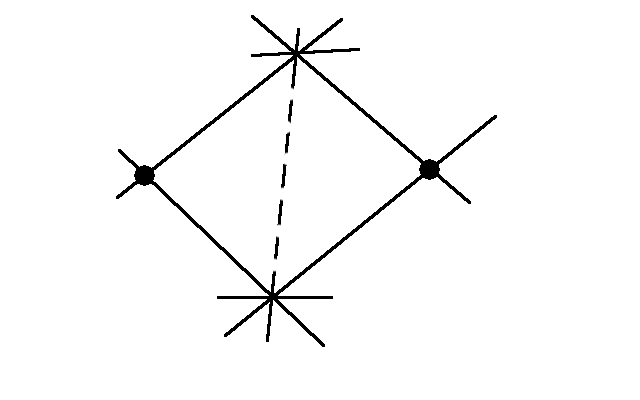}\]
We note that for some dual graph diagrams, the orientations at the ends of a 
strand may disagree; in these cases, we can include bivalent vertices in the 
interior of such an arc, obtaining a bivalent spatial graph with source-sink
orientations; such diagrams are known as \textit{magnetic graphs} in \cite{KM}:
\[\includegraphics{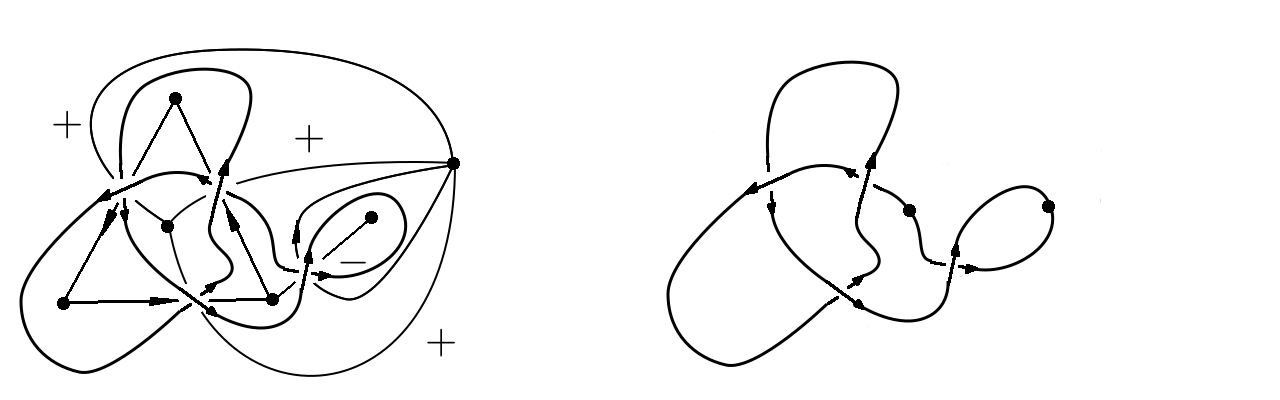}\]

We have the following:

\begin{proposition}\label{prop:1}
Every oriented knot or link has a dual graph diagram with directed edges 
consisting of disjoint cycles.
\end{proposition}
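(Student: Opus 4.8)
The plan is to produce such a diagram from a braid presentation. By Alexander's theorem (every oriented link is the closure of a braid) the oriented link $L$ is the closure $\hat\beta$ of a braid $\beta$ on $n$ strands for some $n$; I would take $D$ to be the standard closed-braid diagram of $\hat\beta$, drawn so that its Seifert circles are $n$ nested concentric circles, all coherently oriented, with every crossing occurring between two consecutive circles. The dual graph diagram $G\cup G'$ of $D$ recovers $D$, and hence $L$, via the reconstruction procedure of Section~\ref{DGD}, so it suffices to check that the directed edges of $G\cup G'$ form disjoint cycles.

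The first step is to pin down which of the two edges crossing at a given vertex pair is the directed one. Reading the decoration convention from the definition, at a crossing the two edges joining opposite regions are distinguished by the relative orientation of the two strands there, and for the ``parallel'' local picture that occurs at every braid crossing the directed edge is the one joining the two regions lying in the annulus between the two consecutive Seifert circles that meet at the crossing (equivalently, the pair of regions that merge under the oriented smoothing of that crossing); its arrow is likewise a function of that local orientation picture. The key point is that a closed braid has all of its strands, and hence all of its Seifert circles, coherently oriented, so this same local picture recurs at every crossing: every directed edge therefore lies inside one of the concentric annuli, and the crossing sign --- which does vary from crossing to crossing --- only affects the sign carried by the other, signed, edge.

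Next I would analyze a single annulus. Let $C$ and $C'$ be consecutive Seifert circles and $\Sigma$ the annulus they bound. If no crossing of $D$ lies between $C$ and $C'$, then $\Sigma$ is a single region incident to no directed edge. Otherwise the $k\ge 1$ crossings between $C$ and $C'$ cut $\Sigma$ cyclically into regions $R_1,\dots,R_k$, and the directed edge at the crossing separating $R_j$ from $R_{j+1}$ (indices modulo $k$) joins exactly $R_j$ and $R_{j+1}$; so the directed edges lying in $\Sigma$ are precisely $R_1R_2, R_2R_3,\dots,R_kR_1$, a single cycle (a loop when $k=1$, a bigon when $k=2$). Because $C$ and $C'$ are coherently oriented, the arrows on these edges circulate consistently around $\Sigma$, making this a directed cycle. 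Every region of $D$ other than the innermost and outermost disks lies between two consecutive Seifert circles, while each of those two disk regions lies between no pair of Seifert circles and so meets no directed edge; hence the directed edges of $G\cup G'$ are exactly the disjoint union, over the concentric annuli of $D$, of the cycles just described. (When $n=1$ there are no crossings and nothing to prove.)

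The main obstacle is the identification in the first step: one has to verify against the figure defining dual graph diagrams that at a coherently oriented crossing it is the ``between the nested Seifert circles'' edge that is assigned an arrow rather than a sign, and that the arrows assigned along a fixed annulus all circulate the same way rather than alternating. Both facts are consequences of the coherent orientation of the Seifert circles of a closed braid, so once the convention is matched up the remainder is bookkeeping about how crossings subdivide concentric annuli, on top of Alexander's theorem; this is the same mechanism that underlies the later bound of the dual graph component number by the braid index.
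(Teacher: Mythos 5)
Your proposal is correct and follows essentially the same route as the paper: the paper's proof is precisely to put $L$ in closed braid form and observe that the directed edges then form disjoint cycles running between the braid strands (one per annulus between consecutive Seifert circles, so $n$ cycles for an $(n+1)$-strand braid). Your write-up just supplies the bookkeeping about which edge at each coherently oriented crossing carries the arrow and why the arrows circulate consistently, details the paper leaves implicit.
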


\begin{proof}
Simply put the knot or link diagram $L$ in closed braid form; the dual graph 
diagram then consists of $n$ disjoint directed cycles (where the braid 
being closed to form $L$ has $n+1$ strands) running vertically between 
the strands of the braid overlaid by locally horizontal signed edges.
\end{proof}

\begin{example}
The figure eight knot $4_1$ has closed braid presentation
with corresponding dual graph diagram as depicted:
\[\includegraphics{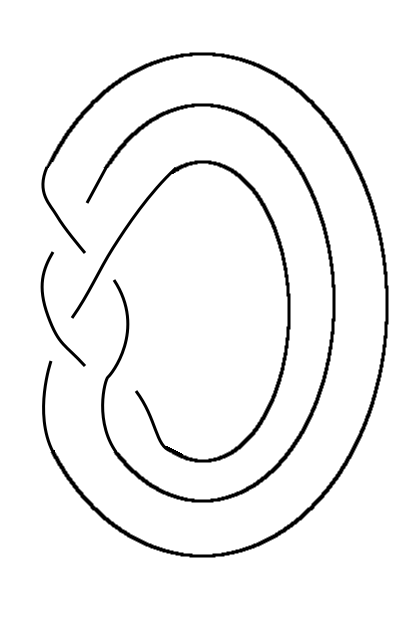}\qquad\qquad \includegraphics{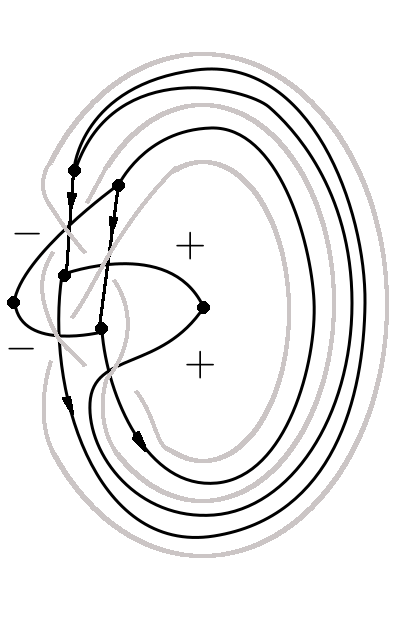}\]
\end{example}

\begin{definition}
The \textit{dual graph component number} of a knot or link $L$ is the minimal 
number of components in the directed subgraph of a dual graph diagram $D$ 
representing $L$, i.e. the graph obtained from $D$  by deleting the signed 
edges, taken over the set of all dual graph diagrams $D$ representing $L$.
\end{definition}

In light of the proof of Proposition \ref{prop:1}, we have the following 
easy observation:

\begin{proposition}
The dual graph component number of an oriented link is bounded above by the 
braid index.
\end{proposition}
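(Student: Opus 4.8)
The plan is to read off the bound directly from the closed-braid construction used to prove Proposition~\ref{prop:1}. Let $b$ be the braid index of the oriented link $L$, so that $L$ is isotopic to the closure of some braid on $b$ strands. Applying the construction in the proof of Proposition~\ref{prop:1} with $n+1 = b$ produces a dual graph diagram $D$ representing $L$ in which the directed edges form exactly $n = b-1$ disjoint cycles: these are the vertical cycles running in the $b-1$ channels between consecutive braid strands, while all remaining edges (the locally horizontal ones straddling the braid generators) are signed and hence disappear when we pass to the directed subgraph.

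Consequently the directed subgraph of $D$ has $b-1$ connected components, so $b-1$ belongs to the set of component counts over which the dual graph component number of $L$ is defined to be the minimum. Therefore the dual graph component number of $L$ is at most $b-1$, and in particular at most the braid index $b$, as claimed.

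I do not expect any genuine obstacle here: the combinatorial bookkeeping — that the $b$ parallel strands of a closed braid cut the annulus into regions whose shared vertical edges assemble into precisely $b-1$ directed cycles, and that every other dual graph edge is signed — is exactly what is already verified in the proof of Proposition~\ref{prop:1}. The only modest subtlety worth flagging is the degenerate case $b=1$ (the unknot), where $n=0$ and the directed subgraph is empty, giving component number $0 \le 1$, consistent with the inequality. One could, with no extra work, state the sharper bound ``braid index minus one,'' but the inequality as stated is all that is needed.
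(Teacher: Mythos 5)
Your argument is exactly the paper's: the paper derives this proposition as an immediate consequence of the proof of Proposition~\ref{prop:1}, putting $L$ in closed braid form on $b$ strands so that the directed edges form $b-1$ disjoint vertical cycles, which gives the stated bound (indeed the sharper bound $b-1$ that you note). Your proposal is correct and takes essentially the same route.
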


Recall that two oriented knot or link diagrams represent ambient isotopic 
knots or links if and only if they are related by a sequence of 
\textit{oriented Reidemeister moves}:
\[\includegraphics{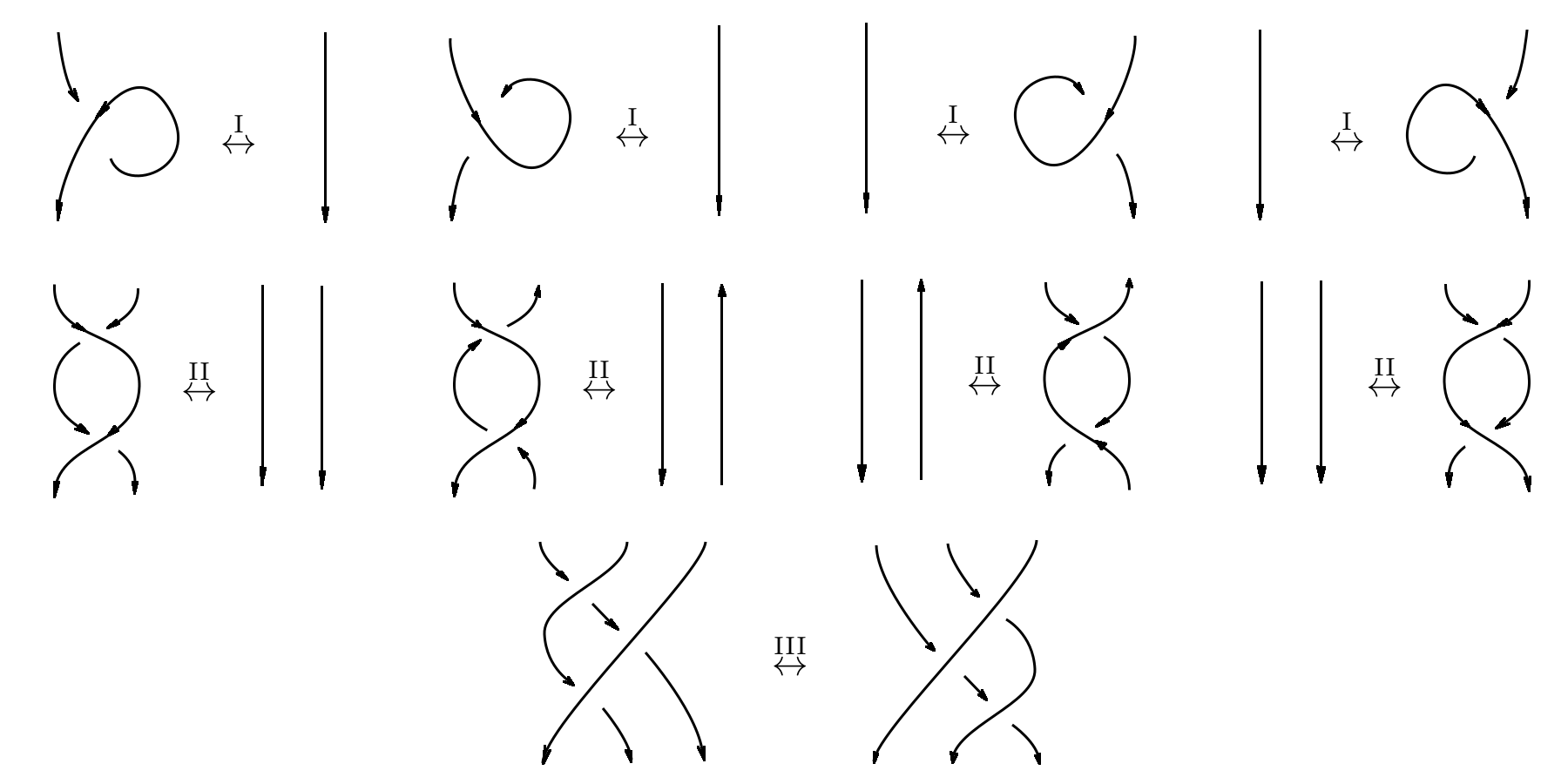}\]

Translating these Reidemeister moves into dual-graph format, we obtain the 
following moves:
\[\includegraphics{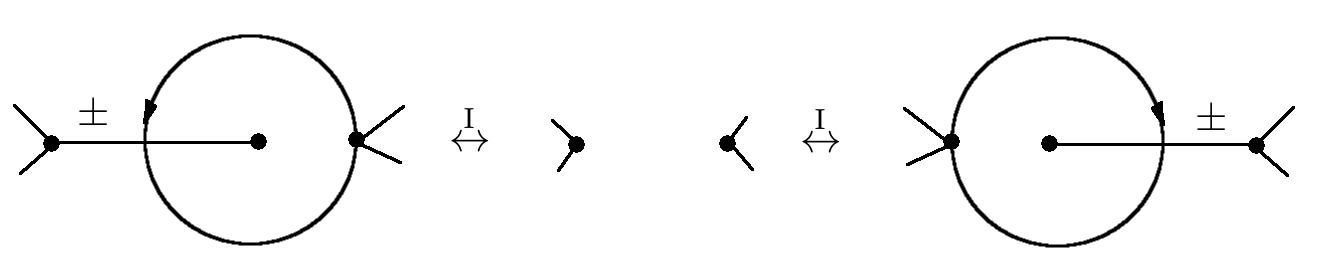}\]
\[\includegraphics{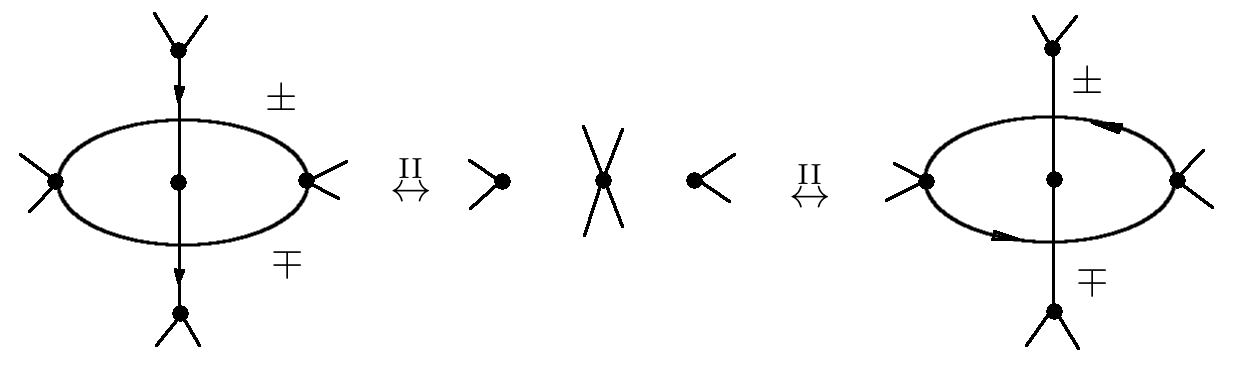}\]
\[\includegraphics{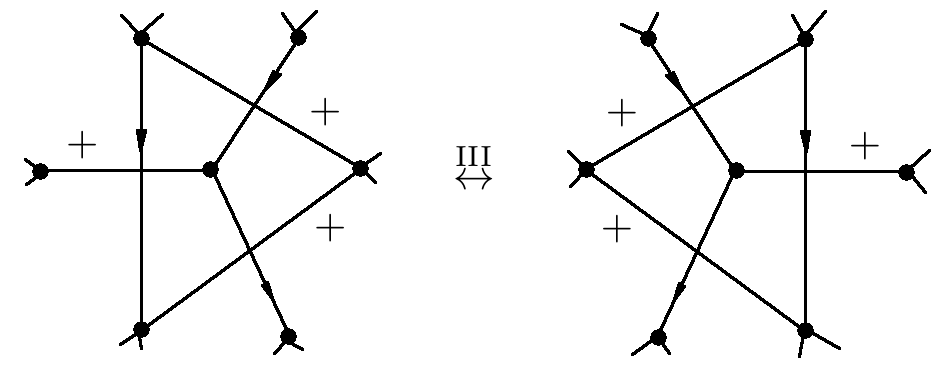}\]

Note that while dual graph Reidemeister moves do allow local vertex-introducing
and vertex-removing moves, they do not allow strands of the knot or link to 
move past bivalent vertices; thus, the larger category of dual graph diagrams
modulo dual graph Reidemeister moves is a slightly different category from 
the usual case of directed bivalent spatial graphs.
%Thus, we can identify Reidemeister equivalence classes of dual graph diagrams
%with magnetic graphs, and 
In particular, classical knots and links form Reidemeister equivalence classes 
of the subset of dual graph diagrams whose reconstructions do not require 
bivalent vertices, with moves restricted to forbid local orientation-reversing 
moves.  

\section{Biquasiles}\label{B}

Let $X$ be a set. We would like to define an algebraic structure on $X$ with
operations and axioms motivated by the Reidemeister moves on dual graph 
diagrams in order to define knot and link invariants. Let us impose on $X$
two binary operations, $\ast:X\times X\to X$ and $\cdot:X\times X\to X$ each 
defining a quasigroup structure on $X$, i.e., such that each operation has 
both a right and left inverse operation (not necessarily equal), which we will 
denote respectively by $/^{\ast},/,\backslash^{\ast}$ and $\backslash$. More 
precisely, we have the following definition:

\begin{definition}
Let $X$ be a set with binary operations $\ast, \cdot, \backslash^{\ast}, /^{\ast}, 
\backslash, /:X\times X\to X$ satisfying
\[\begin{array}{rcccl}
y\backslash^{\ast}(y\ast x) & = & x & = & (x\ast y)/^{\ast} y \\
y\backslash (y\cdot x) & = & x & = & (x\cdot y)/y. 
\end{array}\]
Then we say $X$ is a \textit{biquasile} if for all $a,b,x,y\in X$ we have
\[\begin{array}{rcll}
a\ast(x\cdot [y\ast(a\cdot b)]) & = & (a\ast[x\cdot y])\ast(x\cdot [y\ast([a\ast(x\cdot y)]\cdot b)]) & (i) \\
y\ast([a\ast (x\cdot y)]\cdot b) & = & (y\ast[a\cdot b])\ast([a\ast (x\cdot [y\ast(a\cdot b)])]\cdot b) & (ii).
\end{array}\]
\end{definition}

We will interpret these operations as the following vertex coloring rules at 
crossings in a dual graph diagram:
\[\includegraphics{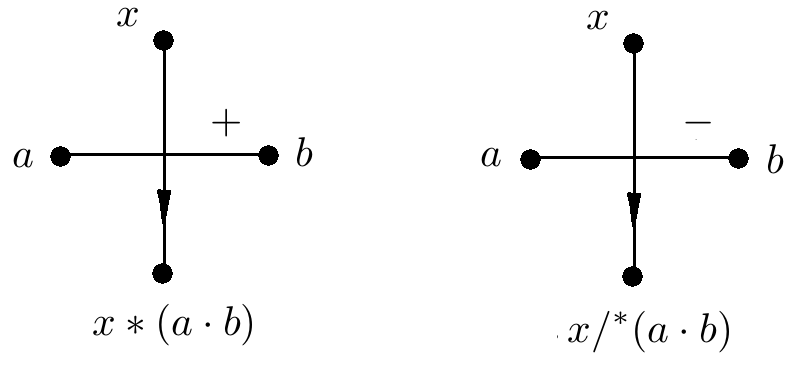}\]

\begin{definition}
Let $X$ be a biquasile and $D$ a dual graph diagram. Then any assignment of
elements of $X$ to the vertices of $D$ satisfying the conditions above is 
an \textit{$X$-coloring} of $D$.
\end{definition}

We want to establish axioms for our algebraic structure to ensure that for each
valid coloring on one side of the move, there is a unique valid coloring on 
the other side.

Consider the two Reidemeister I moves involving positively signed crossings:
\[\includegraphics{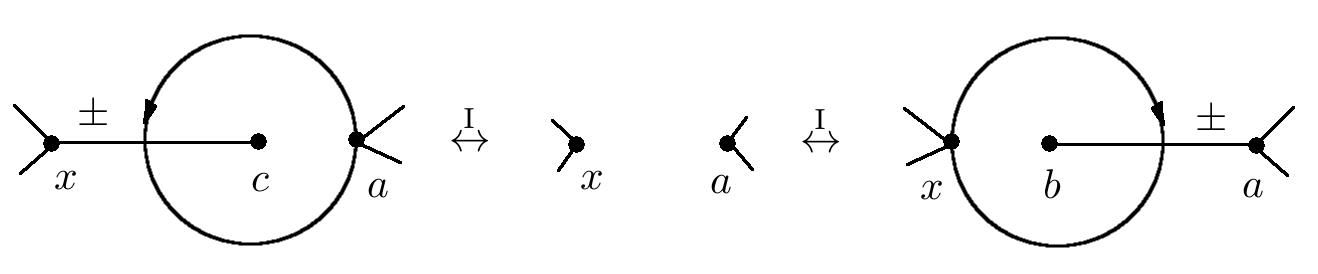}\]
The condition we need is that for all $x,a\in X$ there exist unique elements
$b,c\in X$ such that
\[\begin{array}{rcl}
a & = & a\ast(x\cdot c) \\
x & = & x\ast(b\cdot a). 
\end{array}\]
That is, we must be able to solve the equations $a  =  a\ast(x\cdot c)$
 and $x  =  x\ast(b\cdot a)$ for $c$ and $b$ respectively; this requires 
that $\ast$ has a left inverse operation $\backslash^{\ast}$ and that $\cdot$
has both a left and right inverse operation $\backslash$ and $/$; provided 
that $X$ is a quasigroup under both $\ast$ and $\cdot$, these conditions
are satisfied, with 
\[\begin{array}{rcl}
x\backslash(a\backslash^{\ast} a) & = & c \\
(x\backslash x)/a & = & b. 
\end{array}\]
Since the coloring rule at negative crossings is equivalent to
\[\includegraphics{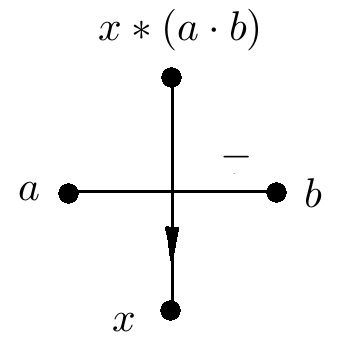}\]
the conditions arising from the Reidemeister I moves involving negatively
signed crossings are the same as those arising from the moves involving 
positively signed crossings.

Taking the \textit{direct} Reidemeister II moves (i.e., the type II moves with
both strands oriented in the same direction)  
\[\includegraphics{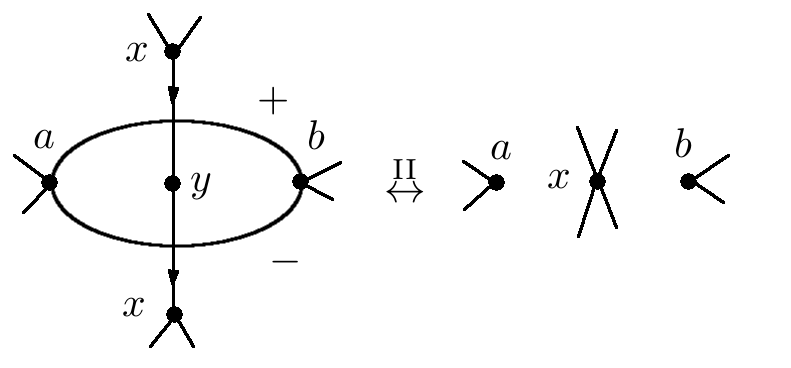}\]
we get the requirement that for every $x,a,b\in X$, there exists a unique 
$y\in X$ such that 
\[\begin{array}{rcll}
 y & = & x\ast(a\cdot b) & (dii.i)\\
 x & = & y/^{\ast}(a\cdot b) & (dii.ii);
\end{array}\]
then we have
\[[x\ast(a\cdot b)]/^{\ast}(a\cdot b)=x\]
as required by definition of the operations $\ast$ and $/^{\ast}$. 
The other direct II move is similar.

The \textit{reverse} Reidemeister II moves, i.e., the type II moves in which 
the strands are oriented in opposite directions,
\[\includegraphics{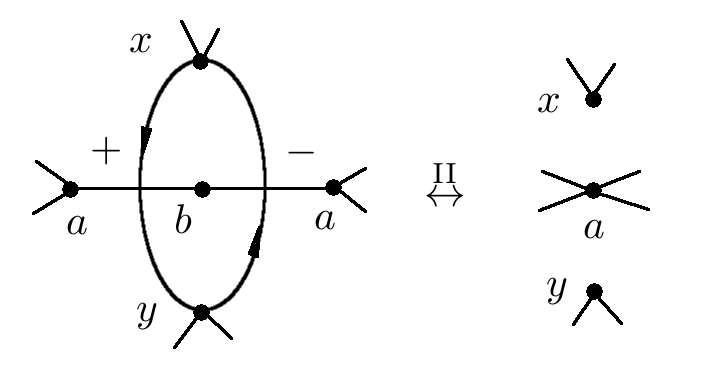}\]
require that for every $x,y,a\in X$ there exists a unique $b$ such that 
\[\begin{array}{rcll}
y & = & x\ast(a\cdot b) & (rii.i) \\
x & = & y/^{\ast}(a\cdot b) & (rii.ii). \end{array}\] 
The existence of the right and left inverse operations for $\ast$ and
$\cdot$ means we can solve both equations for $b$, obtaining 
$a\backslash(x\backslash^{\ast} y)$ in both cases:
\[\begin{array}{rcl}
y & = & x\ast(a\cdot b) \\
x\backslash^{\ast} y & = & a\cdot b \\
a\backslash(x\backslash^{\ast} y) & = & b
\end{array}\quad
\mathrm{and}
\quad\begin{array}{rcl}
x & = & y/^{\ast}(a\cdot b) \\
x\ast(a\cdot b) & = & y \\
a\cdot b & = & x\backslash^{\ast} y \\
b & = & a\backslash(x\backslash^{\ast} y)
\end{array}\]
as required.

The third Reidemeister move 
\[\includegraphics{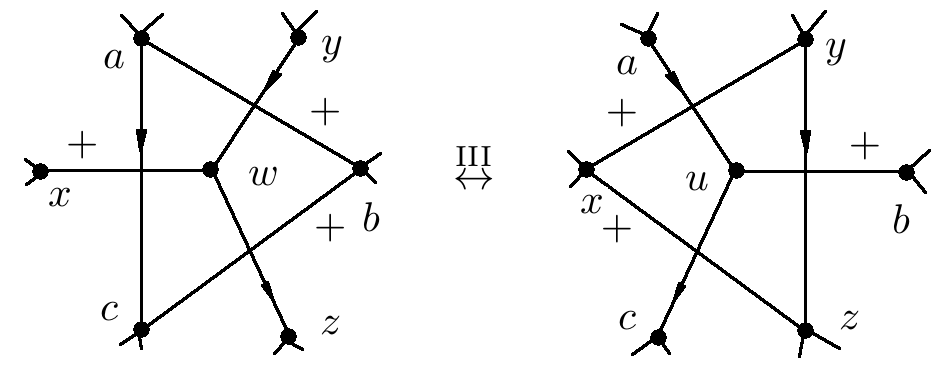}\]
yields the conditions
\[\begin{array}{rcl}
w & = & y\ast(a\cdot b) \\
c & = & a\ast(x\cdot w) \\
z & = & w\ast(c\cdot b)
\end{array}
\quad \mathrm{and}\quad
\begin{array}{rcl}
u & = & a\ast(x\cdot y) \\
c & = & u\ast(x\cdot z) \\
z & = & y\ast(u\cdot b)
\end{array}
\]
so we have
\[\begin{array}{rcl}
c & = & a\ast(x\cdot [y\ast(a\cdot b)]) \\
z & = & (y\ast(a\cdot b))\ast(c\cdot b)
\end{array}
\quad \mathrm{and}\quad
\begin{array}{rcl}
c & = & (a\ast(x\cdot y))\ast(x\cdot z) \\
z & = & y\ast([a\ast(x\cdot y)]\cdot b)
\end{array}
\]
whence
\[\begin{array}{rcll}
a\ast(x\cdot [y\ast(a\cdot b)]) & = & (a\ast[x\cdot y])\ast(x\cdot [y\ast([a\ast(x\cdot y)]\cdot b)]) & (i) \\
y\ast([a\ast (x\cdot y)]\cdot b) & = & (y\ast[a\cdot b])\ast([a\ast (x\cdot [y\ast(a\cdot b)])]\cdot b) & (ii)
\end{array}\]
must be satisfied for all $a,b,x,y\in X$. We can consider these to be somewhat 
complicated analogues of the distributive law. We can reformulate these 
slightly by defining functions
\begin{equation}\label{fg}
f_{a,b}(x,y)=
x\ast(a\cdot [b\ast(x\cdot y)])
\quad\mathrm{and}\quad
g_{a,b}(x,y)=y\ast([a\ast (x\cdot y)]\cdot b);
\end{equation}
then (i) and (ii) are the requirements that 
\begin{equation}\label{linreqs}
f_{a,b}(x,y)=f_{a,b}(x\ast(a\cdot b),y)\quad \mathrm{and}\quad
g_{a,b}(x,y)=g_{a,b}(x,y\ast(a\cdot b)).
\end{equation}

\begin{example} (Dehn Biquasile of an abelian group)
Let $A$ be any abelian group; then $A$ is a biquasile under the operations
\[a\cdot b= a+b\quad \mathrm{and}\quad x\ast y=y-x,\]
as we can easily verify:
\[a/b=a-b,\quad a\backslash b=b-a,\quad x/^{\ast}y=y-x,\quad \mathrm{and}\quad
x\backslash^{\ast}y=x+y \]
and
\[\begin{array}{rcl}
a\ast(x\cdot [y\ast(a\cdot b)]) & = & x+ b-y \\
& = & (a\ast[x\cdot y])\ast(x\cdot [y\ast([a\ast(x\cdot y)]\cdot b)])  \\
y\ast([a\ast (x\cdot y)]\cdot b) & = & 
x-a+b \\
& = & (y\ast[a\cdot b])\ast([a\ast (x\cdot [y\ast(a\cdot b)])]\cdot b).
\end{array}\]
Since the Dehn presentation relation $ax^{-1}by^{-1}=1 $ abelianizes to 
$y=a+b-x=x\ast(a\cdot b)$, this type of biquasile can be understood as a 
generalization of the Dehn presentation of the knot group.
\end{example}

As with other algebraic structures, we can specify a biquasile structure on 
a finite set $X=\{x_1,\dots,x_n\}$ with a pair of matrices encoding the
operation tables of the $\ast$ and $\cdot$ operations. More precisely,
the \textit{biquasile matrix} of a biquasile $(X,\ast,\cdot)$ of cardinality
$n$ is the $n\times 2n$ block matrix with $(j,k)$ entry $m\in\{1,2,\dots,n\}$ 
where
\[x_m=\left\{\begin{array}{ll}
x_j\ast x_k & 1\le k\le n \\
x_j\cdot x_k & n+1\le k\le 2n.
\end{array}\right.\]

\begin{example}\label{ex:X69}
Our \texttt{python} computations reveal $72$ biquasile structures on the set 
$X=\{x_1,x_2,x_3\}$ of three elements, including for instance
\[
\begin{array}{r|rrr}
\ast & x_1 & x_2 & x_3 \\ \hline
x_1 & x_3 & x_2 & x_1 \\
x_2 & x_2 & x_1 & x_3 \\
x_3 & x_1 & x_3 & x_2
\end{array}\quad
\begin{array}{r|rrr}
\cdot & x_1 & x_2 & x_3 \\ \hline
x_1 & x_3 & x_1 & x_2 \\
x_2 & x_1 & x_2 & x_3 \\
x_3 & x_2 & x_3 & x_1
\end{array}
\]
or more compactly
\[
\left[\begin{array}{rrr|rrr}
3 & 2 & 1 & 3 & 1 & 2 \\
2 & 1 & 3 & 1 & 2 & 3 \\
1 & 3 & 2  & 2 & 3 & 1
\end{array}\right].
\]
\end{example}

As with other algebraic categories, we have the following standard definitions.
\begin{definition}
A \textit{biquasile homomorphism} is a map $f:X\to Y$ between biquasiles
such that $f(x\ast x')=f(x)\ast f(x')$ and $f(x\cdot x')=f(x)\cdot f(x')$ 
for all $x,x'\in X$. A bijective homomorphism is an \textit{isomorphism}, 
and an isomorphism $f:X\to X$ is an \textit{automorphism}.
\end{definition}

\begin{example}
There are four biquasile structures on the set $X=\{x_1,x_2\}$, given by
\[X_1=\left[\begin{array}{rr|rr}
1 & 2 & 1 & 2 \\
2 & 1 & 2 & 1
\end{array}\right],\
X_2=\left[\begin{array}{rr|rr}
1 & 2 & 2 & 1 \\
2 & 1 & 1 & 2
\end{array}\right],\
X_3=\left[\begin{array}{rr|rr}
2 & 1 & 1 & 2 \\
1 & 2 & 2 & 1
\end{array}\right]\ \mathrm{and}\
X_4=\left[\begin{array}{rr|rr}
2 & 1 & 2 & 1 \\
1 & 2 & 1 & 2
\end{array}\right].
\]
Of these, there are two isomorphism classes, $\{X_1,X_4\}$ and $\{X_2,X_3\}$.
The 72 biquasiles of order three break down into 19 isomorphism classes, and 
our computations reveal 2880 biquasiles of order four comprising 177 
isomorphism classes.
\end{example}

\begin{definition}
A \textit{sub-biquasile} of $X$ is a subset $S\subset X$ which is itself
a biquasile under the operations of $X$; for $S\subset X$ to be a sub-biquasile
we need closure of $S$ under $\ast,\cdot$ and the right and left inverse
operations of both. Say a biquasile is \textit{simple} if it has no 
nonempty sub-biquasiles.
\end{definition}

\begin{example}
The biquasile structure on $X=\{x_1,x_2,x_3\}$ with operation matrix
\[
\left[\begin{array}{rrr|rrr}
3 & 2 & 1 & 3 & 1 & 2 \\
2 & 1 & 3 & 1 & 2 & 3 \\
1 & 3 & 2  & 2 & 3 & 1
\end{array}\right].
\]
in Example \ref{ex:X69} has one nontrivial sub-biquasile, 
the singleton set $\{3\}$. The biquasile with operation matrix
\[\left[\begin{array}{rrr|rrr}
1 & 3 & 2 & 2 & 1 & 3 \\
3 & 2 & 1 & 1 & 3 & 2 \\
2 & 1 & 3 & 3 & 2 & 1
\end{array}\right]\]
is simple since we have $1\cdot 1=2$, $2\cdot 2=3$ and $3\cdot 3=1$, so the
closure of every nonempty subset of $X=\{x_1,x_2,x_3\}$ under the biquasile 
operations is all of $X$.
\end{example}

\begin{definition}
Let $X$ be a set and $W(X)$ the set defined recursively by the following rules:
\begin{itemize}
\item[(i)] $x\in X$ implies $x\in W(X)$, and
\item[(ii)] $x,y\in W(X)$ implies the formal expressions
$x\ast y, x\cdot y, x/^{\ast}y, x\backslash^{\ast}y, x/y$ and
$x\backslash y \in W(X)$.
\end{itemize}
We call the elements of $W(X)$ \textit{biquasile words} in the 
\textit{generators} $X$. Then we define the \textit{free biquasile on $X$} to 
be the set of equivalence classes of biquasile words in $X$ modulo the relations
determined by the biquasile axioms, e.g. $(x\ast y)/^{\ast} y\sim x$, 
$y\ast([a\ast (x\cdot y)]\cdot b)\sim (y\ast[a\cdot b])\ast([a\ast (x\cdot [y\ast(a\cdot b)])]\cdot b)$, etc.
More generally, given a set of generators $X$ and a set of \textit{relations}
$R$, i.e., equations of biquasile words, the \textit{biquasile presented by}
$\langle X \ |\ R\rangle$ is the set of equivalence classes of biquasile words
in $X$ modulo the equivalence relation generated by the biquasile axioms
together with the relations in $R$.
\end{definition}

As in other universal algebraic systems, biquasiles presented by presentations 
related by the following \textit{Tietze moves} are isomorphic:
\begin{itemize}
\item[(i)] Adding or removing a generator $x$ and relation of the form $x=W$
where $W$ is a word in the other generators not involving $x$, and
\item[(ii)] Adding or removing a relation which is a consequence of the other
relations.
\end{itemize}

An important example is the \textit{fundamental biquasile} of an oriented
knot or link, defined as follows:

\begin{definition}
Let $L$ be a dual graph diagram and $X$ its set of vertices. Then the
\textit{fundamental biquasile} of $L$ is the biquasile with presentation
$\langle X\ |\ R\rangle$ where at each edge crossing in the dual graph 
diagram we have a relation as pictured.
\[\includegraphics{dn-sn-7.png}\]
\end{definition}

\begin{example}\label{ex:trefoil}
The dual graph diagram $D$ below has the fundamental biquasile presentation 
listed.
\[\raisebox{-0.6in}{\includegraphics{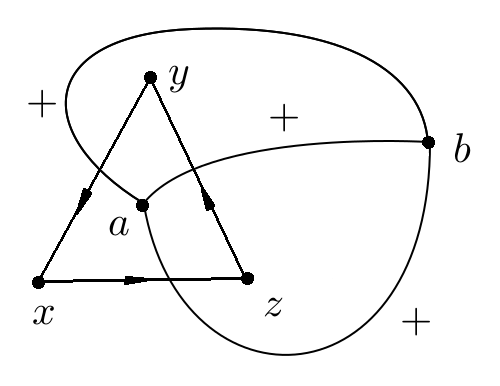}}
\quad \begin{array}{c}
\langle x,y,z,a,b \ |\ x\ast(b\cdot a)=z, z\ast(b\cdot a)=y, y\ast(b\cdot a)=x\rangle \\
=\langle x,y,a,b \ |\ [x\ast(b\cdot a)]\ast(b\cdot a)=y, y\ast(b\cdot a)=x\rangle \\
=\langle y,a,b \ |\ [(y\ast[b\cdot a])\ast(b\cdot a)]\ast(b\cdot a)=y \rangle.
\end{array}
\]
\end{example}

By construction, we have the following:
\begin{proposition}\label{prop:equiv}
The isomorphism class of the fundamental biquasile of an oriented link
is a link invariant.
\end{proposition}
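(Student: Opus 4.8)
\section*{Proof proposal for Proposition~\ref{prop:equiv}}

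The plan is to show that the presentation $\langle X\mid R\rangle$ defining the fundamental biquasile changes only by Tietze moves under each dual graph Reidemeister move. Since Tietze-equivalent presentations yield isomorphic biquasiles, and since by Proposition~\ref{prop:1} every oriented link $L$ has a dual graph diagram while (by the discussion above identifying classical links with the dual graph Reidemeister equivalence classes of diagrams reconstructible without bivalent vertices) any two dual graph diagrams of $L$ are joined by a finite sequence of these moves, this establishes the invariance. Each move is supported in a disk, and a dual graph diagram tiles $S^2$ by quadrilaterals, so performing a move alters only the generators attached to the regions inside that disk and the relations attached to the crossings inside it; every other generator and relation of $\langle X\mid R\rangle$ is carried along unchanged. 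Hence it suffices to compare the two local presentations across each move.

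For the Reidemeister I moves (for both signs of the crossing) and for the direct and reverse Reidemeister II moves, the move introduces some new regions, hence new generators, together with some new crossings, hence new relations. The local computations carried out just before the statement show that in every case the new generators can be solved for uniquely as biquasile words in the generators already present: for type~I this is the existence and uniqueness of $c$ and $b$ with $c = x\backslash(a\backslash^{\ast}a)$ and $b = (x\backslash x)/a$, and for the type~II moves it is the solution $y = x\ast(a\cdot b)$, respectively $b = a\backslash(x\backslash^{\ast}y)$. Each such generator, together with its defining relation, is removed by a Tietze move of type~(i), and after the resulting substitutions any remaining new relation collapses --- by the quasigroup identities for $\ast$ and $\cdot$, for instance $[x\ast(a\cdot b)]/^{\ast}(a\cdot b) = x$ --- to a consequence of the biquasile axioms, so it may be removed by a Tietze move of type~(ii). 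Thus each type~I or type~II move changes $\langle X\mid R\rangle$ only by Tietze moves.

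For the Reidemeister III move the crossing count is unchanged and one region (with color $w$) is replaced by another (with color $u$), the remaining regions --- with colors $x,y,a,b,c,z$ in the notation used above --- persisting across the move. The region $w$ (respectively $u$) lies in the interior of the tangle and so occurs only in the three crossing relations there, whence a Tietze move of type~(i) eliminates it; what remains on one side is the pair of relations $c = a\ast(x\cdot[y\ast(a\cdot b)])$ and $z = (y\ast(a\cdot b))\ast(c\cdot b)$, and on the other side the pair $z = y\ast([a\ast(x\cdot y)]\cdot b)$ and $c = (a\ast[x\cdot y])\ast(x\cdot z)$. The computation preceding the statement shows that, modulo the biquasile axioms, each of these pairs is a consequence of the other --- this being precisely the content of axioms~(i) and~(ii) --- so the two presentations differ by Tietze moves of type~(ii). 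Hence the type~III move also changes $\langle X\mid R\rangle$ only by Tietze moves, which completes the argument.

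I expect the real content to be contained already in the derivation of axioms~(i) and~(ii) from the dual graph Reidemeister III move, and in the fact that the quasigroup inverse operations were built into the definition precisely so that the type~I and~II moves behave as required; the main obstacle is therefore organizational rather than conceptual. One must verify, picture by picture, that every oriented Reidemeister move is accounted for --- both signs in the type~I case, all of the direct and reverse type~II moves, and the type~III moves --- and that in each case the bookkeeping of which regions and crossings are created or destroyed makes the Tietze moves line up so that no unintended generator or relation is left behind.
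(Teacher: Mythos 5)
Your proposal is correct and takes essentially the same route as the paper, which disposes of this proposition with ``By construction'': the biquasile axioms and the quasigroup conditions were derived in Section \ref{B} precisely so that each dual graph Reidemeister move alters the fundamental presentation only by Tietze moves (new generators solvable via the inverse operations for the type I and II moves, and the equivalence of the two relation pairs in the type III move being exactly axioms (i) and (ii)). Your write-up simply makes that implicit bookkeeping explicit, matching the paper's intended argument.
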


\begin{definition}
In light of proposition \ref{prop:equiv}, we define the fundamental biquasile 
of an oriented knot or link $L$ to be the fundamental biquasile of any dual 
graph diagram $D$ representing $L$.
\end{definition}

\begin{definition}
Let $X$ be a biquasile and $D$ a dual graph diagram. The \textit{biquasile 
counting invariant} of $D$, denoted $\Phi_X^{\mathbb{Z}}(D)$, is the cardinality 
of the set of $X$-colorings of $D$, i.e., assignments of elements of $X$ to 
vertices in $D$ satisfying the conditions
\[\includegraphics{dn-sn-7.png}.\] Colorings of $D$ by $X$ can be interpreted
as homomorphisms from the fundamental biquasile of $D$ to $X$.
\end{definition}

By construction, we have the following:
\begin{theorem}
If $|X|$ is finite, then $\Phi_X^{\mathbb{Z}}(D)\le |X|^{|V|}$ where $V$
is the set of vertices in $D$. If $D$ and $D'$ are
related by Reidemeister moves, then $\Phi_X^{\mathbb{Z}}(D)=\Phi_X^{\mathbb{Z}}(D)'$
and hence  $\Phi_X^{\mathbb{Z}}(D)$ is an oriented link invariant.
\end{theorem}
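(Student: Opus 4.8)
The plan is to prove the two assertions separately. The cardinality bound is immediate: by definition an $X$-coloring of $D$ is a function from $V$ to $X$ satisfying the local crossing conditions, so the set of $X$-colorings is a subset of $X^V$, which has exactly $|X|^{|V|}$ elements; hence $\Phi_X^{\mathbb{Z}}(D)\le |X|^{|V|}$.

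For the invariance claim, since any two dual graph diagrams representing ambient isotopic links are connected by a finite sequence of dual graph Reidemeister moves, it suffices to show that whenever $D$ and $D'$ differ by a single such move the number of $X$-colorings is unchanged; composing the resulting equalities along the sequence then yields $\Phi_X^{\mathbb{Z}}(D)=\Phi_X^{\mathbb{Z}}(D')$. Concretely, for each move I would construct a bijection between the $X$-colorings of $D$ and those of $D'$ that fixes the colors of every vertex lying outside the disk in which the move is performed. Because an $X$-coloring is determined by its values on vertices and the crossing conditions are local, such a bijection is completely specified once one checks that (a) any coloring of the unchanged part extends in a unique way over the modified part on each side, and (b) these two extensions agree on the vertices that persist through the move.

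The verifications of (a) and (b) are exactly the computations preceding the statement, now read in the direction of building the bijection. For the Reidemeister I moves (both signs, the negative case reducing to the positive one by the stated equivalence of the negative crossing rule with a rearrangement of the positive one), the vertices $b,c$ introduced by a kink are forced uniquely by $c=x\backslash(a\backslash^{\ast}a)$ and $b=(x\backslash x)/a$ using that $X$ is a quasigroup under $\ast$ and $\cdot$; collapsing the kink simply forgets these vertices. For the direct Reidemeister II moves the defining relations of $\ast$ and $/^{\ast}$ produce the unique color $y=x\ast(a\cdot b)$ with $x=y/^{\ast}(a\cdot b)$, and for the reverse Reidemeister II moves the inverse operations of $\ast$ and $\cdot$ give the unique $b=a\backslash(x\backslash^{\ast}y)$ satisfying both equations; in each case the persisting vertices retain their colors, so (b) is automatic. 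For the Reidemeister III move, a coloring of either side is parametrized by the colors $a,b,x,y$ of the incoming vertices, and the colors $c,z$ of the remaining vertices are computed from $a,b,x,y$ by the two chains of crossing relations displayed above; the map sending a coloring of the left diagram to the coloring of the right diagram with the same $a,b,x,y$ is then a well-defined bijection precisely because axioms $(i)$ and $(ii)$ assert that the two formulas for $c$ and the two formulas for $z$ coincide for all $a,b,x,y\in X$.

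The main obstacle is the Reidemeister III case: confirming that the biquasile axioms $(i)$ and $(ii)$ are exactly the identities needed for the left and right colorings to match on the shared vertices. However, this is essentially the derivation of those axioms run backwards, so no new computation is needed. The remaining care is bookkeeping: one must run through the other orientation and sign variants of each move, and the vertex-introducing and vertex-removing versions, each of which reduces to a case already handled via mirror symmetry and the observation that the negative-crossing coloring rule is equivalent to a rearranged positive one. Having assembled a color-preserving bijection for every move, the equality of $\Phi_X^{\mathbb{Z}}$ across a single move follows, hence across any sequence of moves; since dual graph diagrams related by these moves represent the same oriented link, $\Phi_X^{\mathbb{Z}}(D)$ is an oriented link invariant.
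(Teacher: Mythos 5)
Your proposal is correct and follows the same route as the paper: the paper proves this theorem ``by construction,'' meaning the cardinality bound is immediate from colorings being a subset of $X^V$, and invariance follows because the biquasile axioms were derived in the preceding discussion precisely so that each dual graph Reidemeister move induces a bijection on colorings fixing the vertices outside the move. Your write-up simply makes that implicit argument explicit, move by move, which is exactly the intended content.
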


\begin{example}
The dual graph diagram $D$ in Example \ref{ex:trefoil} has nine colorings
by the biquasile $X$ from example \ref{ex:X69}, as we can find by trying all
assignments of element of $X$ to the generators $y,a,b$ in the presentation
of the fundamental biquasile of $D$ and checking which such assignments satisfy
the relation $R$ given by $[(y\ast[b\cdot a])\ast(b\cdot a)]\ast(b\cdot a)=y$:
\[
\begin{array}{rrrr|rrrr|rrrr}
y & a & b & R? & y & a & b & R? & y & a & b & R?  \\ \hline
1 & 1 & 1 & \checkmark  & 2 & 1 & 1 &    & 3 & 1 & 1 &    \\
1 & 1 & 2 &    & 2 & 1 & 2 & \checkmark  & 3 & 1 & 2 &    \\
1 & 1 & 3 &    & 2 & 1 & 3 &    & 3 & 1 & 3 & \checkmark  \\
1 & 2 & 1 &    & 2 & 2 & 1 & \checkmark  & 3 & 2 & 1 &    \\
1 & 2 & 2 &    & 2 & 2 & 2 &    & 3 & 2 & 2 & \checkmark  \\
1 & 2 & 3 & \checkmark  & 2 & 2 & 3 &    & 3 & 2 & 3 &    \\
1 & 3 & 1 &  & 2 & 3 & 1 &    & 3 & 3 & 1 & \checkmark    \\
1 & 3 & 2 & \checkmark    & 2 & 3 & 2 &    & 3 & 3 & 2 &  \\
1 & 3 & 3 &    & 2 & 3 & 3 & \checkmark  & 3 & 3 & 3 &    \\
\end{array}
\]
\end{example}

\begin{example}
Using our Python code, we selected three biquasiles of order 5,
\begin{eqnarray*}
X_1 & = & \left[\begin{array}{rrrrr|rrrrr}%abqs(2,4,2,5)
1& 3& 5& 2& 4& 1& 5& 4& 3& 2\\ 
4& 1& 3& 5& 2& 3& 2& 1& 5& 4\\
2& 4& 1& 3& 5& 5& 4& 3& 2& 1\\
5& 2& 4& 1& 3& 2& 1& 5& 4& 3\\
3& 5& 2& 4& 1& 4& 3& 2& 1& 5
\end{array}\right] \\
X_2 & = & \left[\begin{array}{rrrrr|rrrrr}%abqs(1,1,3,5)
1& 4& 2& 5& 3& 1& 2& 3& 4& 5 \\
2& 5& 3& 1& 4& 2& 3& 4& 5& 1 \\
3& 1& 4& 2& 5& 3& 4& 5& 1& 2 \\
4& 2& 5& 3& 1& 4& 5& 1& 2& 3 \\ 
5& 3& 1& 4& 2& 5& 1& 2& 3& 4
\end{array}\right] \\
X_3 & = & \left[\begin{array}{rrrrr|rrrrr}%abqs(4,1,2,5)
1& 3& 5& 2& 4& 1& 2& 3& 4& 5 \\
5& 2& 4& 1& 3& 5& 1& 2& 3& 4 \\
4& 1& 3& 5& 2& 4& 5& 1& 2& 3 \\
3& 5& 2& 4& 1& 3& 4& 5& 1& 2 \\
2& 4& 1& 3& 5& 2& 3& 4& 5& 1
\end{array}\right]
\end{eqnarray*}
and computed the counting invariant 
for all prime knots with up to eight crossings and prime links with up to 
seven crossings; the results are collected below. Nontrivial 
values (i.e., values differing from that of the unlink of the same number 
of components) are listed in \textbf{bold}. 
\[\begin{array}{|r|rrrrrrrrrrrr|}\hline
K & 3_1 & 4_1 & 5_1 & 5_2 & 6_1 & 6_2 & 6_3 & 7_1 & 7_2 & 7_3 & 7_4 & 7_5 \\ \hline
\Phi_{X_1}^{\mathbb{Z}}(K) &  25 & 25 & 25 & 25 & 125 & 25 & 25& 25 & \mathbf{125} & 25 & 25 & 25  \\
\Phi_{X_2}^{\mathbb{Z}}(K) & 25 & \mathbf{125} & \mathbf{125} & 25 & 25 &25 & 25 & 25 & 25 & 25 & \mathbf{125} & 25 \\
\Phi_{X_3}^{\mathbb{Z}}(K) & 25 & 25 & 25 & 25 & 25 &25 & 25 & 25 & 25 & 25 & 25 & 25 \\\hline\hline
K                       & 7_6 & 7_7 & 8_1 & 8_2 & 8_3 & 8_4 & 8_5 & 8_6 & 8_7 & 8_8 & 8_9 & 8_{10} \\ \hline
\Phi_{X_1}^{\mathbb{Z}}(K) & \mathbf{125} & 25  & 25  & 25  & 25 & 25 & 25 & 25 & 25 & \mathbf{125} & 25 & 25 \\  
\Phi_{X_2}^{\mathbb{Z}}(K) & 25  & 25  & 25  & 25 & 25 & 25 & 25 & 25 & 25 & \mathbf{125} & \mathbf{125} & 25 \\
\Phi_{X_3}^{\mathbb{Z}}(K) &  25 & 25 & 25 & 25 & 25 & 25 & 25 &  25 & 25 & 25 & 25 & 25 \\ \hline\hline
\end{array}\]\[
\begin{array}{|r|rrrrrrrrrrrr|}\hline
K &  8_{11} & 8_{12} & 8_{13} & 8_{14} & 8_{15} & 8_{16} & 8_{17} & 8_{18} & 8_{19} & 8_{20} & 8_{21} &  \\ \hline
\Phi_{X_1}^{\mathbb{Z}}(K) & \mathbf{125} & 25 & 25 & 25 & \mathbf{125} & 25 & 25& 25 &  25 & 25 & 25 &\\
\Phi_{X_2}^{\mathbb{Z}}(K) &  25 & 25 & 25 & 25 & 25 & \mathbf{125} & 25 & \mathbf{125} & 25 & 25 & \mathbf{125} & \\
\Phi_{X_3}^{\mathbb{Z}}(K) &  25 & 25 & 25 & 25 & 25 & 25 & 25 & 25 & 25 & 25 & 25 & \\ \hline
\end{array}\]\[
\begin{array}{|r|rrrrrrrrr|}\hline
L & L2a1 & L4a1 & L5a1 & L6a1 & L6a2 & L6a3 & L6a4 & L6a5 & L6n1 \\ \hline
\Phi_{X_1}^{\mathbb{Z}}(K) & \mathbf{25} & \mathbf{25} & \mathbf{25} & \mathbf{25} & \mathbf{25} & \mathbf{25} & \mathbf{25} & \mathbf{25} & \mathbf{25} \\ 
\Phi_{X_2}^{\mathbb{Z}}(K) & \mathbf{25} & \mathbf{25} & \mathbf{25} & \mathbf{25} & 125 & \mathbf{25} & \mathbf{25} & \mathbf{25} & \mathbf{25} \\
\Phi_{X_3}^{\mathbb{Z}}(K) & 125 & 125 & 125 & 125 & 125 & 125 & \mathbf{25} & 625 & 625  \\ \hline\hline
L & L7a1 & L7a2 & L7a3 & L7a4 & L7a5 & L7a6 & L7a7 & L7n1 & L7n2 \\\hline
\Phi_{X_1}^{\mathbb{Z}}(L) & \mathbf{25} & \mathbf{25} & 125 & \mathbf{25} & 125 & \mathbf{25} & \mathbf{25} & 125& \mathbf{25} \\
\Phi_{X_2}^{\mathbb{Z}}(L) & \mathbf{25} & 125 & \mathbf{25} & \mathbf{25} & \mathbf{25} & \mathbf{25} & \mathbf{25} & 125& \mathbf{25}\\
\Phi_{X_3}^{\mathbb{Z}}(L) & 125 & 125 & 125 & 125 & 125 & 125 & 625 & 125 &125 \\ \hline
\end{array}
\]
\end{example}

\section{Alexander Biquasiles}\label{AB}

As with previous knot-coloring structures, we can consider the case of
biquasile structures with linear operations, which we call \textit{Alexander 
biquasiles.} We can think of Alexander biquasiles as generalizations
of Dehn biquasiles.

\begin{proposition}
Let $L=\mathbb{Z}[d^{\pm 1}, n^{\pm 1}, s^{\pm 1}]$. 
An $L$-module $X$ is a \textit{linear biquasile}, also called an 
\textit{Alexander biquasile}, under the operations
\begin{equation}\label{linops}
x\ast y=(-dsn^2)x+ny\quad \mathrm{and}\quad x\cdot y=dx+sy.
\end{equation}
\end{proposition}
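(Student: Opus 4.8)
The plan is to verify directly that the operations $x\ast y=(-dsn^2)x+ny$ and $x\cdot y=dx+sy$ satisfy the two defining requirements of a biquasile: first that each operation is a quasigroup operation on the $L$-module $X$ (so that the right and left inverse operations exist), and second that the two biquasile axioms $(i)$ and $(ii)$ hold for all $a,b,x,y\in X$. Since both operations are $L$-linear in each variable with unit-coefficient leading terms (the coefficients $-dsn^2$, $n$, $d$, $s$ are all units in $L$), the quasigroup property is immediate: given $y$, the map $x\mapsto x\ast y$ is the affine-linear bijection $x\mapsto (-dsn^2)x+ny$ with inverse $z\mapsto (-dsn^2)^{-1}(z-ny)$, and similarly for the other variable and for $\cdot$. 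Explicitly I would record the inverse operations
\[\begin{array}{rclcrcl}
x\backslash^{\ast}y & = & n^{-1}\bigl(y+dsn^2 x\bigr), & \quad & x/^{\ast}y & = & (-dsn^2)^{-1}\bigl(y-nx\bigr),\\
x\backslash y & = & s^{-1}\bigl(y-dx\bigr), & & x/y & = & d^{-1}\bigl(x-sy\bigr),
\end{array}\]
and note that the two quasigroup identities $y\backslash^{\ast}(y\ast x)=x=(x\ast y)/^{\ast}y$ and $y\backslash(y\cdot x)=x=(x\cdot y)/y$ then hold by direct substitution.

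For the biquasile axioms, the approach is to expand both sides of $(i)$ and $(ii)$ into $L$-linear combinations of $a,b,x,y$ and compare coefficients. The cleanest bookkeeping device is the reformulation already given in the excerpt: by equation \eqref{linreqs}, axiom $(i)$ says $f_{a,b}(x,y)=f_{a,b}\bigl(x\ast(a\cdot b),y\bigr)$ and axiom $(ii)$ says $g_{a,b}(x,y)=g_{a,b}\bigl(x,y\ast(a\cdot b)\bigr)$, where $f_{a,b}(x,y)=x\ast(a\cdot[b\ast(x\cdot y)])$ and $g_{a,b}(x,y)=y\ast([a\ast(x\cdot y)]\cdot b)$. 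So I would compute $f_{a,b}(x,y)$ as an explicit linear form in $x$ and observe that, because $x$ enters $f_{a,b}$ only through the combination that is invariant under replacing $x$ by $x\ast(a\cdot b)$, the two sides agree; concretely, expanding $f_{a,b}(x,y)=(-dsn^2)x+n\bigl(d a+s[(-dsn^2)b+n(dx+sy)]\bigr)$ and collecting the coefficient of $x$ gives $-dsn^2+n\cdot s\cdot n\cdot d=-dsn^2+dsn^2=0$, so $f_{a,b}(x,y)$ does not depend on $x$ at all, which makes \eqref{linreqs} for $f$ trivially true. The same phenomenon should occur for $g_{a,b}$ in the variable $y$: expanding $g_{a,b}(x,y)=(-dsn^2)y+n\bigl([(-dsn^2)(dx+sy)+ny]\,d+sb\bigr)$ and the coefficient of $y$ is $-dsn^2+n(-dsn^2\cdot s\cdot d?\ldots)$ — here I would just carry out the expansion carefully and check the $y$-coefficient vanishes; the vanishing is exactly what the choice of structure constants $(-dsn^2,n,d,s)$ was engineered to produce.

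The only real work, and the place I expect to spend the most care, is the coefficient arithmetic in the $g$-computation: $g_{a,b}$ involves the nested term $a\ast(x\cdot y)$ inside the $\cdot b$ slot inside the outer $\ast$, so when expanded it is a sum of monomials in $d,s,n$ multiplying each of $a,b,x,y$, and one must track signs and powers correctly to confirm the $y$-coefficient is zero (and incidentally that the $x$-coefficient of $f$ is zero). Because the operations are linear, there are no convergence or well-definedness subtleties; once the two key coefficients are shown to vanish, \eqref{linreqs} holds identically and hence so do $(i)$ and $(ii)$. I would therefore present the proof as: (1) exhibit the four inverse operations and note the quasigroup identities are immediate; (2) substitute the linear operations into $f_{a,b}$ and $g_{a,b}$, expand, and observe the coefficient of $x$ in $f_{a,b}(x,y)$ and the coefficient of $y$ in $g_{a,b}(x,y)$ are both $0$; (3) conclude via \eqref{linreqs} that the biquasile axioms hold, so $(X,\ast,\cdot)$ is an Alexander biquasile. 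As a sanity check I would also verify that setting $d=s=1$, $n=-1$ (or the analogous specialization) recovers the Dehn biquasile $x\ast y=y-x$, $x\cdot y=x+y$ of the preceding example.
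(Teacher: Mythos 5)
Your proposal is correct and is essentially the paper's own proof: the paper likewise notes that invertibility of $d,s,n$ gives the quasigroup structure and then verifies \eqref{linreqs} by linear expansion, finding $f_{a,b}(x,y)=(dn)a+(-n^3s^2d)b+(s^2n^2)y$ and $g_{a,b}(x,y)=(-n^3d^2s)a+(ns)b+(d^2n^2)x$, i.e.\ $f$ is independent of $x$ and $g$ of $y$, exactly your observation. The only blemish is your tentative expansion of $g$, which dropped the $a$: it should read $g_{a,b}(x,y)=(-dsn^2)y+n\bigl(d\bigl[(-dsn^2)a+n(dx+sy)\bigr]+sb\bigr)$, whose $y$-coefficient is $-dsn^2+n^2ds=0$, confirming the vanishing you predicted.
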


\begin{proof}
First, we note that the invertibility of the variables $d,s$ and $n$
makes $\ast$ and $\cdot$ quasigroup operations.
Instate the notation of \eqref{fg}, where the operations are now given 
by \eqref{linops}.  We seek to verify the relations in \eqref{linreqs}.  
One readily computes that
\begin{align*}
f_{a,b}(x,y) = (dn)a + (-n^3s^2d)b + (s^2n^2)y  = %, \quad\mathrm{and}\\
f_{a,b}(x\ast(a\cdot b),y). %=  (dn)a + (-2n^3s^2d)b + (s^2n^2)y. 
\end{align*}
%Setting these equal, we thus require that:
%\begin{align}\label{req1}
%t + s^2uv &= t(t+s^2uv)\\
%0 &= su(t + s^2uv)\notag\\
%0 &= sv(t + s^2uv).\notag
%\end{align}
Similarly, we have
\begin{align*}
g_{a,b}(x,y) =  (-n^3d^2s)a + (ns)b + (d^2n^2)x = %, \quad\mathrm{and}\\
g_{a,b}(x,y\ast(a\cdot b)). %= (2tsu+s^3u^2v)a + (tsv+s^3uv^2+v)b + (s^2u^2)y. 
\end{align*}
%Setting these equal yields the same requirements as \eqref{req1}.  These are satisfied either when $t+s^2uv=0$ or when $t=1$ and $us=vs=0$.  
This verifies \eqref{linreqs} and completes the claim. %\dn{Confirm this does indeed complete the claim}
\end{proof}

\begin{example} \label{ex:zn}
\textit{(Alexander biquasile structures on $\mathbb{Z}_m$)}
%\dn{Not sure if we want this example...}
As a special case, one can consider groups $\mathbb{Z}_m$ and allow 
$d,n,s\in\mathbb{Z}_m^{\times}$.  For example, in $\mathbb{Z}_3$ there are 
seven unique (non-isomorphic) possibilities for assigning $d,n,s$ that 
satisfy \eqref{linreqs}:
\begin{center}
\begin{tabular}{llll}
 \bfseries{d} & \bfseries{n} & \bfseries{s} & \bfseries{$-n^2ds$} \\
\hline
 1 &	1 &	2 & 1\\
2&	1	&1 & 1\\
2&	2	&1 & 1\\
1	&1&	1 & 2\\
1	&2	&1& 2\\
2	&1	&2& 2\\
2	&2	&2& 2\\
\end{tabular}
\end{center}
We may also be interested in the number of configurations of $(d,n,s)$ that 
satisfy the conditions \eqref{linreqs} (as well as forming a quasigroup), and 
how many of those configurations create non-isomorphic structures:
\begin{center}
\begin{tabular}{|l|l|l|}
\hline
\bfseries{m} & \bfseries{\# configurations} & \bfseries{\# non-isomorphic} \\
\hline
2	& 1 &	1 \\
3 & 8 & 7\\
4& 8&7 \\
5& 64&34 \\
6& 8&7 \\
7& 216&137 \\
8&64 &33 \\
9& 216& 152\\
10& 64& 34\\
\hline
\end{tabular}
\end{center}

As an extension to this example, we consider a dual graph diagram $K$ 
%over $\mathbb{Z}_3$ 
and compute the number of colorings, $\Phi_{\mathbb{Z}_3}^{\mathbb{Z}}(K)$, of $K$
by the biquasile $X=\mathbb{Z}_3$ with the operations
\[
x\ast y=x+y, \quad x/^{\ast} y=x+2y, \quad x\cdot y=x+2y,
\]
which correspond to the selection of $d=1, n=1,$ and $s=2$.  Consider the following knot and corresponding dual graph diagram $K$, where we have labeled the nodes for reference.

\begin{center}
\includegraphics{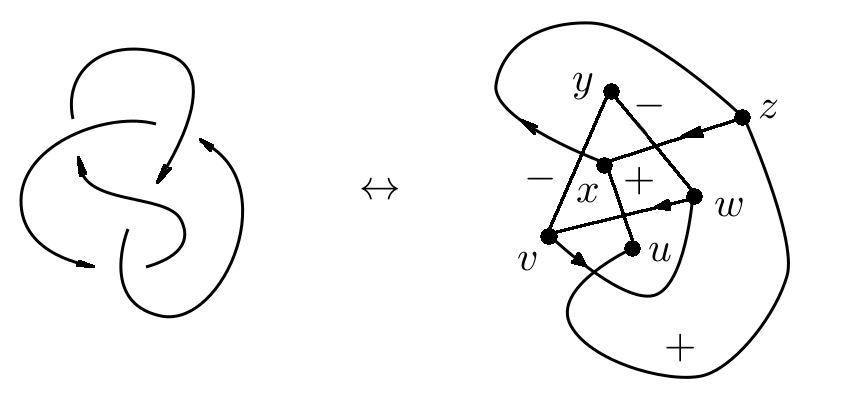}
\end{center}

This dual graph diagram yields the following four equations (along with their equivalent forms):\\

\begin{center}
\begin{tabular}{rcl}
$z/^{\ast}(y\cdot w) = x$ &$\quad\leftrightarrow\quad$ & $z + 2y + w = x$\\
$x/^{\ast}(y\cdot v) = z$ &$\quad\leftrightarrow\quad$ & $x + 2y + v = z$\\
$w\ast(x\cdot u) = v$ &$\quad\leftrightarrow\quad$ & $w + x + 2u = v$\\
$v\ast(z\cdot u) = w$ &$\quad\leftrightarrow\quad$ & $v + z + 2u = w$.\\
\end{tabular}\\
\end{center}

Rewriting as homogeneous equations and putting in matrix form (with respect to the vector $[u,v,w,x,y,z]$), these give:
$$
\left[
\begin{array}{rrrrrr}
0 & 0 & 1 & 2 & 2 & 1 \\
0 & 1 & 0 & 1 & 2 & 2 \\
2 & 2 & 1 & 1 & 0 & 0 \\
2 & 1 & 2 & 0 & 0 & 1
\end{array}\right]
\xrightarrow{\mathrm{Row\ moves\ over\ \mathbb{Z}_3}}
\left[\begin{array}{rrrrrr}
1 & 1 & 0 & 1 & 2 & 1 \\
0 & 1 & 0 & 1 & 2 & 2 \\
0 & 0 & 1 & 2 & 2 & 1 \\
0 & 0 & 0 & 1 & 0 & 2 \\
\end{array}\right]
$$

The solution space is thus two-dimensional, and so we have 
$\Phi^\mathbb{Z}_{\mathbb{Z}_3}(K) = 3^2 = 9$.

%\dn{Please check my algebra here. Also, do we want to include another example?}
\end{example}

\begin{example}
As in the case of quandles and biquandles \cite{EN}, we can define a 
module-valued Alexander invariant of oriented knots and links by considering
the \textit{Alexandrization} of the fundamental biquasile of an oriented
knot or link, i.e. the fundamental biquasile written as an Alexander biquasile. More precisely, the kernel of the coefficient matrix of the 
homogeneous system of linear equations over 
$L$ is an $L$-module valued invariant of 
oriented knots and links analogous to the classical Alexander invariant;
from it, we can derive polynomial-valued invariants via the Gr\"obner basis
construction described in \cite{CHN}. 

For instance, the knot $4_1$
in example \ref{ex:zn} has Alexander biquasile given by the kernel
of the matrix below with entries in $L$:
\[
%\left[
%\begin{array}{rrrrrr}
%0 & 0 & s &-n & d & -dsn^2 \\
%0 & s & 0 & -dsn^2 & d &-n \\
%s & -n & -dsn^2 & d & 0 & 0 \\
%s & -dsn^2 & -n & 0 & 0 & d
%\end{array}\right].
\left[
\begin{array}{rrrrrr}
-dsn^2 & nd & -1 & ns & 0 & 0 \\
-1 & nd & -dsn^2 & 0 & ns & 0 \\
nd & 0 & 0 & ns & -1 & -dsn^2 \\
0 & 0 & nd & ns & -dsn^2 & -1
\end{array}\right].
\]
These invariants will be the subject of 
another paper.
\end{example}

\section{Questions}\label{Q}

We end with a few collected questions for future research.

\begin{itemize}
\item What, if anything, is the relationship between biquasiles and biquandles? 
\item What enhancements  of the biquasile counting invariants can be defined? 
\item What kinds of categorifications of biquasiles and their invariants are 
possible?
\end{itemize}

\bibliography{dn-sn}{}
\bibliographystyle{abbrv}

\bigskip

\noindent
\textsc{Department of Mathematical Sciences \\
Claremont McKenna College \\
850 Columbia Ave. \\
Claremont, CA 91711}

\end{document}